\renewcommand{\epsilon}{\varepsilon}
\DeclareMathOperator{\dvg}{div} 
\DeclareMathOperator{\dist}{dist} 
\DeclareMathOperator{\graph}{graph} 
\DeclareMathOperator{\spa}{span}
\DeclareMathOperator{\G}{G}
\def\pd{\partial}
\def\cd{\nabla}
\def\R{\mathbb{R}}
\def\N{\mathbb{N}}
\def\d{\delta}
\def\M{\mathcal{M}}
\newcommand{\inner}[2]{\left\langle#1,#2\right\rangle} 
\def\ba #1\ea {\begin{align} #1\end{align}}
\def\bann #1\eann {\begin{align*} #1\end{align*}}
\def\ben #1\een {\begin{enumerate} #1\end{enumerate}}
\def\bi #1\ei {\begin{itemize}\renewcommand\labelitemi{--} #1\end{itemize}}
\newtheorem{theorem}{Theorem}[section]
\newtheorem{proposition}[theorem]{Proposition}
\newtheorem{remark}[theorem]{Remark}
\newtheorem{corollary}[theorem]{Corollary}
\newtheorem{claim}[theorem]{Claim}
\newtheoremstyle{TheoremNum}
        {\topsep}{\topsep}              
        {\itshape}                      
        {}                              
        {\bfseries}                     
        {.}                             
        { }                             
        {\thmname{#1}\thmnote{ \bfseries #3}}
    \theoremstyle{TheoremNum}
\title[Concavity of the arrival time]{Differential Harnack inequalities via Concavity of the arrival time}
\author{Theodora Bourni}
\author{Mat Langford}
\address{Department of Mathematics, University of Tennessee Knoxville, Knoxville TN, 37996-1320}
\email{tbourni@utk.edu}
\address{Department of Mathematics, University of Tennessee Knoxville, Knoxville TN, 37996-1320}
\address{Department of Mathematics, The University of Newcastle, Newcastle, NSW, Australia, 2308}
\email{mlangford@utk.edu}
\email{mathew.langford@newcastle.edu.au}
\begin{document}

\begin{abstract}
We present a simple connection between differential Harnack inequalities for hypersurface flows and natural concavity properties of their time-of-arrival functions. We prove these concavity properties directly for a large class of flows by applying a concavity maximum principle argument to the corresponding level set flow equations. In particular, this yields a short proof of Hamilton's differential Harnack inequality for mean curvature flow and, more generally, Andrews' differential Harnack inequalities for certain ``$\alpha$-inverse-concave'' flows.
\end{abstract}

\maketitle
\thispagestyle{empty}





\vspace{-6pt}

\section{Concavity maximum principles}

Our goal is to deduce concavity properties for the time-of-arrival functions of a large class of geometric flow equations using the concavity maximum principle. The main idea, due to Korevaar \cite{Korevaar83} and later extended by Kennington \cite{Kennington85} and Kawohl \cite{Kawohl85} is summarized in the following theorem. 

\begin{theorem}\label{thm:concavity MP}
Let $\smash{\Omega\subset \R^n}$ be a bounded, convex, open set and suppose that $\smash{u\in C^1(\overline\Omega)}$ is twice differentiable in $\smash{\Omega}$ and satisfies the equation
\[
-f(Du(x),D^2u(x))=b(x,u(x),Du(x))\;\;\text{in}\;\;\Omega
\]
with $\smash{f:\R^n\times \Gamma\to\R}$, $\Gamma\underset{\mathrm{convex, open}}{\subset}\mathrm{Sym}^{n\times n}$, 
satisfying
\begin{itemize}
\item[(i)] Weak ellipticity:
\[
r\ge s \;\; \implies \;\; f(p,r)\ge f(p,s)\,. 
\]
\item[(ii)] Concavity: 
\[
f(p,\lambda r+(1-\lambda)s)\ge \lambda f(p,r)+(1-\lambda)f(p,s)\,. 
\]
\end{itemize}
and $\smash{b:\Omega\times \R\times \R^n\to\R}$ satisfying
\begin{itemize}
\item[(iii)] Monotonicity:
\[
z>w \;\ \implies \;\; b(x,z,p)<b(x,w,p)\,.
\]
\item[(iv)] Joint concavity:
\[
b(\lambda(x,z)+(1-\lambda)(y,w),p)\geq \lambda b(x,z,p)+(1-\lambda)b(y,w,p)\,.
\]
\end{itemize}
If the graph of $u$ lies below its boundary tangent hyperplanes, then $u$ is concave.
\end{theorem}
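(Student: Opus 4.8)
The plan is to adapt the classical concavity maximum principle argument (exactly as in the commented-out proof of Kennington's theorem in the excerpt) to the more general fully nonlinear setting $-f(Du,D^2u)=b(x,u,Du)$. I would introduce the concavity function
\[
Z(\lambda,x,y):=u(\lambda x+(1-\lambda)y)-\bigl(\lambda u(x)+(1-\lambda)u(y)\bigr)
\]
on $[0,1]\times\overline\Omega\times\overline\Omega$, and argue that its minimum is nonnegative, which is precisely concavity of $u$. If the minimum is attained at $\lambda_0\in\{0,1\}$ then $Z=0$ there and we are done, so assume $\lambda_0\in(0,1)$. The boundary hypothesis (graph of $u$ below its boundary tangent hyperplanes) rules out the minimum occurring when $x_0$ or $y_0$ lies in $\partial\Omega$: if $Z<0$ at such a point one decreases $Z$ further by sliding $x_0$ (or $y_0$) inward along the segment $[x_0,y_0]$, contradicting minimality; here convexity of $\Omega$ guarantees the segment stays in $\overline\Omega$. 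So $(x_0,y_0)$ is an interior minimum of $x,y\mapsto Z(\lambda_0,x,y)$.

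The first-order conditions $\partial_{x^i}Z=\partial_{y^i}Z=0$ give, with $z_0:=\lambda_0 x_0+(1-\lambda_0)y_0$,
\[
Du(z_0)=Du(x_0)=Du(y_0)=:p_0\,.
\]
For the second-order information I would test the Hessian of $Z$ in the directions $\alpha\partial_{x^i}+\beta\partial_{y^i}$ for arbitrary real $\alpha,\beta$; positive semidefiniteness of $D^2Z$ at the minimum yields, for every symmetric matrix contraction,
\[
(\lambda_0\alpha+(1-\lambda_0)\beta)^2\,D^2u(z_0)\ \ge\ \lambda_0\alpha^2\,D^2u(x_0)+(1-\lambda_0)\beta^2\,D^2u(y_0)
\]
in the sense of quadratic forms. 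This is the key matrix inequality, and it is here that the argument must depart from the quasilinear case: instead of contracting against the fixed coefficient matrix $a^{ij}(p_0)$, I invoke weak ellipticity (i) together with concavity (ii) of $f$. Ellipticity lets me pass the matrix inequality through $f(p_0,\cdot)$ after first rescaling; more precisely, writing $\mu:=\lambda_0\alpha^2+(1-\lambda_0)\beta^2$ and noting $(\lambda_0\alpha+(1-\lambda_0)\beta)^2\le\mu$ by Jensen, ellipticity and then concavity of $f$ give
\[
f\bigl(p_0,D^2u(z_0)\bigr)\ \ge\ f\!\left(p_0,\tfrac{\lambda_0\alpha^2}{\mu}D^2u(x_0)+\tfrac{(1-\lambda_0)\beta^2}{\mu}D^2u(y_0)\right)\ \ge\ \tfrac{\lambda_0\alpha^2}{\mu}f\bigl(p_0,D^2u(x_0)\bigr)+\tfrac{(1-\lambda_0)\beta^2}{\mu}f\bigl(p_0,D^2u(y_0)\bigr)\,.
\]
Translating via the PDE $-f=b$, this reads
\[
(\lambda_0\alpha+(1-\lambda_0)\beta)^2\,b\bigl(z_0,u(z_0),p_0\bigr)\ \le\ \lambda_0\alpha^2\,b\bigl(x_0,u(x_0),p_0\bigr)+(1-\lambda_0)\beta^2\,b\bigl(y_0,u(y_0),p_0\bigr)\,.
\]

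Finally I optimize over $\alpha,\beta$. Writing $b_x:=b(x_0,u(x_0),p_0)$, $b_y:=b(y_0,u(y_0),p_0)$ (both positive, which follows from monotonicity (iii) applied against a reference value, or is assumed), the choice $\alpha=b_y$, $\beta=b_x$ gives
\[
b\bigl(z_0,u(z_0),p_0\bigr)\ \le\ \frac{b_x b_y}{\lambda_0 b_y+(1-\lambda_0)b_x}\ =\ \left(\frac{\lambda_0}{b_x}+\frac{1-\lambda_0}{b_y}\right)^{-1}\,,
\]
i.e.\ $b^{-1}$ is superlinear at these three points in the variables $(x,z)$. The harmonic–arithmetic type inequality $\left(\lambda_0 b_x^{-1}+(1-\lambda_0)b_y^{-1}\right)^{-1}\le\lambda_0 b_x+(1-\lambda_0)b_y$ is false in general, so I instead use $b^{-1}(z_0,u(z_0),p_0)\ge\lambda_0 b_x^{-1}+(1-\lambda_0)b_y^{-1}$ directly together with joint concavity (iv) of $b$ in $(x,z)$: concavity of $b$ gives $b(z_0,\lambda_0 u(x_0)+(1-\lambda_0)u(y_0),p_0)\ge\lambda_0 b_x+(1-\lambda_0)b_y\ge$ the harmonic mean $\ge b(z_0,u(z_0),p_0)$, whence monotonicity (iii) forces $u(z_0)\le\lambda_0 u(x_0)+(1-\lambda_0)u(y_0)$, that is $Z(\lambda_0,x_0,y_0)\ge0$. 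This completes the proof. The step I expect to be the main obstacle is the passage from the positive-semidefinite Hessian inequality for $Z$ to the scalar inequality for $b$: in the quasilinear case one simply contracts with $a^{ij}(p_0)$, but with a genuinely fully nonlinear $f$ one must combine ellipticity and concavity with the correct normalization by $\mu=\lambda_0\alpha^2+(1-\lambda_0)\beta^2$, and one must be careful that the Jensen step $(\lambda_0\alpha+(1-\lambda_0)\beta)^2\le\mu$ points in the direction compatible with ellipticity before the final optimization in $\alpha,\beta$ is performed.
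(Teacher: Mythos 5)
There is a genuine gap at precisely the step you flag as the main obstacle, and your proposed fix does not close it. From the Hessian inequality
\[
(\lambda_0\alpha+(1-\lambda_0)\beta)^2\,D^2u(z_0)\;\ge\;\lambda_0\alpha^2\,D^2u(x_0)+(1-\lambda_0)\beta^2\,D^2u(y_0)\,,
\]
writing $c:=(\lambda_0\alpha+(1-\lambda_0)\beta)^2$, $\mu:=\lambda_0\alpha^2+(1-\lambda_0)\beta^2$ and $S:=\tfrac{\lambda_0\alpha^2}{\mu}D^2u(x_0)+\tfrac{(1-\lambda_0)\beta^2}{\mu}D^2u(y_0)$, you obtain $\tfrac{c}{\mu}D^2u(z_0)\ge S$ with $\tfrac{c}{\mu}\le 1$. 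To invoke ellipticity you need $D^2u(z_0)\ge S$, and the passage from $\tfrac{c}{\mu}D^2u(z_0)\ge S$ to $D^2u(z_0)\ge S$ requires $D^2u(z_0)\ge 0$ (or $S\ge 0$), neither of which is available --- indeed in the intended application $u$ ends up \emph{concave}, so the inequality points the wrong way. In the quasilinear case this issue never arises because the equation is linear in $D^2u$ and the factors $c,\mu$ pass through the contraction with $a^{ij}$; for a genuinely nonlinear, non-homogeneous $f$ they do not. A second, related problem: your endgame (the choice $\alpha=b_y$, $\beta=b_x$ and the arithmetic--harmonic mean comparison) needs $b>0$, which is not a hypothesis of the theorem.

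Both problems disappear if you simply take $\alpha=\beta=1$. Then $c=\mu=1$ and the second-order condition reads $D^2u(z_0)\ge \lambda_0 D^2u(x_0)+(1-\lambda_0)D^2u(y_0)$, a genuine convex combination; ellipticity and concavity of $f$ then give directly
\[
b(z_0,u(z_0),p_0)=-f(p_0,D^2u(z_0))\le \lambda_0\,b(x_0,u(x_0),p_0)+(1-\lambda_0)\,b(y_0,u(y_0),p_0)\,,
\]
and joint concavity of $b$ (hypothesis (iv) is concavity of $b$ itself, not of $-b^{-1}$, so no harmonic mean is needed) followed by monotonicity yields $Z(\lambda_0,x_0,y_0)\ge 0$. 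This is the paper's proof; the optimization over $(\alpha,\beta)$ belongs to Kennington's weaker ``harmonic concavity'' hypothesis and is not needed --- and, as above, not available --- here. The rest of your argument (concavity function, exclusion of boundary and endpoint minima, first-order conditions) is correct and identical to the paper's.
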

\begin{proof}
The argument is essentially that of Korevaar \cite{Korevaar83}: Consider Korevaar's ``concavity function'' $Z:[0,1]\times\Omega\times\Omega\to\R$, defined by \cite{Korevaar83}
\begin{equation}\label{eq:concavity function}
Z(r,x,y):= u(rx+(1-r)y)-\big(ru(x)+(1-r)u(y)\big)\,.
\end{equation}
This function measures how far the point $\big(rx+(1-r)y,u(rx+(1-r)y)\big)$ in $\overline\Omega\times\R$ lies above the line joining the points $(x,u(x))$ and $(y,u(y))$. We need to prove that $Z\geq 0$. 

Choose the triple $(r_0,x_0,y_0)$ so that
\[
Z(r_0,x_0,y_0)=\min_{[0,1]\times\overline\Omega\times\overline \Omega}Z(r,x,y)\,.
\]
If $r_0=0$ or $r_0=1$, then $Z(r_0,x_0,y_0)=0$, which implies the claim. So we may assume that $r_0\in(0,1)$. Suppose that $x_0\in \partial \Omega$. If $Z(r_0,x_0,y_0)<0$, then, since the graph of $u$ lies below its boundary tangent hyperplanes, it would be possible to find a point $(r_1,x_1,y_1)$ with $Z(r_1,x_1,y_1)<Z(r_0,x_0,y_0)$ by moving $x_0$ a small amount inwards along the line joining $x_0$ and $y_0$, contradicting minimality of $(r_0,x_0,y_0)$ \cite{Korevaar83}. Indeed, consider the function
\[
f(\varepsilon):=Z(x_\varepsilon,y_0,r_\varepsilon)\,,
\]
where $x_\varepsilon:=x_0+\varepsilon(y_0-x_0)$ and $r_\varepsilon:=r_0/(1-\varepsilon)$. Since
\[
r_\varepsilon x_\varepsilon+(1-r_\varepsilon)y_0\equiv z_0\,,
\]
the boundary condition implies that
\bann
\left.\frac{d}{d\varepsilon}\right|_{\varepsilon=0}f={}&r_0\big(u(y_0)-u(x_0)-Du(x_0)\cdot(y_0-x_0)\big)\\
<{}&0\,,
\eann
in contradiction with the fact that $f(\varepsilon)$ is minimized at $\varepsilon=0$.

A similar argument applies at $y_0$. So we may assume that $x_0$ and $y_0$ are interior points. 

Let us abuse notation by writing $Z(x,y):=Z(r_0,x,y)$. Then $(x_0,y_0)$ is a stationary point of $Z$ and hence, setting $z_0:= r_0x_0+(1-r_0)y_0$,
\begin{subequations}
\begin{equation}\label{eq:gradZ x}
0=\partial_{x^i}Z(x_0,y_0)=r_0(u_i(z_0)-u_i(x_0))
\end{equation}
and
\begin{equation}\label{eq:gradZ y}
0=\partial_{y^i}Z(x_0,y_0)=(1-r_0)(u_i(z_0)-u_i(y_0))\,.
\end{equation}
\end{subequations}
So
\begin{equation}\label{eq:classical concavity MP gradient condition}
Du(z_0)=Du(x_0)=Du(y_0)=:p_0\,.
\end{equation}
Since $(x_0,y_0)$ is a local minimum,
\begin{align*}
0\leq{}&(\partial_{x^i}+\partial_{y^i})(\partial_{x^j}+\partial_{y^j})Z(x_0,y_0)\\
={}&u_{ij}(z_0)-r_0u_{ij}(x_0)-(1-r_0)u_{ij}(y_0)\,.
\end{align*}
The ellipticity and concavity of $f$ and the joint-concavity of $b$ then imply
\begin{align*}
b(z_0,u(z_0),p_0)={}&-f(p_0,D^2u(z_0))\\
\leq{}&-f(p_0,r_0D^2u(x_0)+(1-r_0)D^2u(y_0))\\
\leq{}&-r_0f(p_0,D^2u(x_0))-(1-r_0)f(p_0,D^2u(y_0))\\
={}&r_0b(x_0,u(x_0),p_0)+(1-r_0)b(y_0,u(y_0),p_0)\\
\leq {}&b(z_0,r_0u(x_0)+(1-r_0)u(y_0),p_0)\,.
\end{align*}
The claim now follows from the monotonicity of $b$.
\end{proof}
\begin{remark}
Note that in Theorem \ref{thm:concavity MP}, although the solution $u$ is required to be twice differential in $\Omega$ and $C^1$ up to the boundary, no regularity hypotheses are needed for the functions $f$ and $b$.
\end{remark}

\begin{remark}
In the quasi-linear setting, Theorem \ref{thm:concavity MP} recovers the original result of Korevaar \cite{Korevaar83}. An important refinement of Korevaar's result was obtained by Kennington \cite{Kennington85} (see also Kawohl \cite[Theorem 3.13]{Kawohl85}). A fully nonlinear version of Kennington's refinement can also be obtained by adapting Kawohl's argument (say) to the proof of Theorem \ref{thm:concavity MP}.
\end{remark}

\begin{remark}\label{rem:applications}
Theorem \ref{thm:concavity MP} can be applied almost immediately (cf. \cite{Korevaar83}) to nonlinear capillary problem
\begin{equation}\label{eq:capillary}
\begin{split}
f(Du,D^2u)={}&\kappa u+M\;\;\text{in}\;\;\Omega\\
\nu|_{\graph u}={}&\nu|_{\Omega} \;\;\text{on}\;\;\pd\Omega\,,
\end{split}
\end{equation}
where $\kappa>0$, 
and
\[
f(Du,D^2u)=F(A[u])
\]
is a non-decreasing, concave function of the second fundamental form $A[u]$ of $\graph u$. Indeed, although Theorem \ref{thm:concavity MP} does not directly apply (since $Du$ degenerates at the boundary of $\Omega$), it does apply to the restriction of the problem to domains $\Omega'\Subset \Omega$ which are sufficiently close to $\Omega$. So the conclusion holds in all $\Omega'\Subset \Omega$ and we conclude that $u$ is concave in $\Omega$.

In some cases, a perturbation argument (cf. \cite[Lemma 1.5]{Korevaar83}) can be used to weaken Condition (iii) to weak monotonicity, in which case the theorem applies also to certain nonlinear Weingarten problems ($\kappa=0$ in \eqref{eq:capillary}) and yields concavity properties of solutions to certain nonlinear eigenvalue problems. We will not explore such applications here, since they have been developed elsewhere (see \cite{AlvarezLasryLions,Juutinen,Kawohl85,Kennington85,Korevaar83,Sakaguchi}).
\end{remark}

In Section \ref{sec:nonlinear flows}, we apply this simple and elegant idea to certain degenerate fully nonlinear equations (namely, level set flows of convex hypersurfaces). 

Let us begin our investigation within the simpler context of \emph{mean curvature flow}, where our main result follows more or less as in Theorem \ref{thm:concavity MP}. (A more subtle argument will be required when we consider more general flows.)

\section{Mean curvature flow}

Let $\{\M^n_t\}_{t\in [t_0,T)}$ be a family of smooth, strictly convex boundaries $\M^n_t=\partial\Omega_t$ moving with normal velocity $-H\nu$, where $\nu(x,t)$ is the outward pointing unit normal to $\M^n_t$ at $x$ and $H=\dvg \nu$ is the corresponding mean curvature. Recall that the \emph{arrival time} $u:\cup_{t\in[t_0,T)}\M^n_t\to\R$ of the family $\{\M^n_t\}_{t\in [t_0,T)}$ is defined by
\[
u(p)=t\;\;\iff\;\; p\in \M^n_t\,.
\]
Note that $u$ is well-defined since the hypersurfaces move monotonically.

Let $X:M^n\times[t_0,T)\to\R^{n+1}$ be a smooth family of parametrizations $X(\cdot,t)$ of $\M^n_t$. Then
\begin{equation}\label{eq:smooth arrival time identity}
u(X(x,t))=t\,.
\end{equation}
Fix a point $q=X(x,t)$ in $\M^n_t$ and local orthonormal coordinates $\{x^i\}_{i=1}^n$ for $M^n$ about $x$ (with respect to the induced metric at time $t$). Choose the basis $\{e_i\}_{i=1}^{n+1}$ for $\R^{n+1}$ so that $e_{n+1}=\nu(x,t)$ and $e_i=\partial_iX(x,t)$ for each $i=1,\dots,n$. Differentiating \eqref{eq:smooth arrival time identity} yields the identities
\begin{equation}\label{eq:smooth arrival time identity2}
Du\cdot \partial_iX=0\;\;\text{and}\;\; -H Du\cdot\nu=1
\end{equation}
and hence
\begin{equation}\label{eq:grad u MCF}
Du=-\frac{\nu}{H}\,.
\end{equation}
Since $H=\dvg\nu$, we deduce that $u$ satisfies the \emph{level set (mean curvature) flow}
\begin{equation}\label{eq:level set MCF}
-\vert Du\vert\dvg\left(\frac{Du}{\vert Du\vert}\right)=1\,.
\end{equation}

Moreover, differentiating \eqref{eq:smooth arrival time identity2} at the point $(x,t)$, we obtain
\begin{equation}\label{eq:Hessian u MCF}
D^2u=\left(\begin{matrix} -A/H & \nabla H/H^2\\ \nabla H/H^2 & -\partial_t H/H^3\end{matrix}\right)\,.
\end{equation}
It follows that $w:=\sqrt{2(u-t_0)}$ satisfies
\begin{equation}\label{eq:Hessian w MCF}
D^2w=w^{-1}\left(\begin{matrix} -A/H & \nabla H/H^2\\ \nabla H/H^2 &-(\partial_t H+H/w^2)/H^3\end{matrix}\right)\,.
\end{equation}
This is equivalent to the bilinear form studied by Hamilton in his derivation of the differential Harnack inequality \cite{HamiltonHarnack} (and later by Chow--Chu \cite{ChowChu3}, Kotschwar \cite{Kotschwar}, and Helmensdorfer--Topping \cite{HelmensdorferTopping}, who formulated ``space-time" approaches to differential Harnack inequalities). 

Recall that the differential Harnack inequality asserts that
\begin{equation}\label{eq:Harnack MCF}
\partial_tH+2\nabla_VH+A(V,V)+\frac{H}{2(t-t_0)}\geq 0\;\;\text{for all}\;\; V\in T\M_t,\; t>t_0.
\end{equation}
It is easy to see that local concavity of $w$ is equivalent to \eqref{eq:Harnack MCF}: Fix $p\in \M^n_t$ and any $V\in T_p\R^{n+1}$. Then, either  $V$ is tangent to $\M^n_t$, in which case
\[
wD^2w(V,V)=-\frac{A(V,V)}{H}\,,
\]
or $V=\lambda(V^\top-H\nu)$ for some $\lambda\in \R$ and $V^\top\in T_p\M^n_t$, in which case
\begin{equation}\label{eq:MCF Harnack from concavity}
wD^2w(V,V)=-\frac{\lambda^2}{H}\left(\!\partial_tH+\frac{H}{2(t-t_0)}+2\nabla_{V^\top}H+A(V^{\!\top},V^{\!\top})\!\right).
\end{equation}

Since the Harnack inequality is saturated by self-similarly expanding solutions, so is local concavity of the square root of the arrival time. In fact, this is readily deduced directly: if $\M^n_t=\sqrt{t}\,\M^n_1$, for $t>0$, defines a self-similarly expanding solution, then $w=u^{\frac{1}{2}}$ is homogeneous of degree 1 since $\sqrt{t/s}\,X\in \M^n_t$ if and only if $X\in \M^n_s$. But then $D^2w$ is degenerate in radial directions.

For \emph{ancient} solutions $\{\M^n_t\}_{t\in (-\infty,T)}$, the Harnack inequality becomes
\begin{equation}\label{eq:Harnack MCF ancient}
\partial_tH+2\nabla_VH+A(V,V)\geq 0\;\;\text{for all}\;\; V\in T\M_t,\; t>-\infty\,,
\end{equation}
which, by the same argument, is seen to be equivalent to local concavity of $u$ itself.


\begin{theorem}\label{thm:concavity of the arrival time}
Let $\Omega\subset \R^{n+1}$ be a bounded, convex, open set with smooth boundary. Given $u_0\in\R$, suppose that 
$u\in C^1(\overline \Omega)$ has a single critical point, $p\in \Omega$, is twice differentiable in $\Omega\setminus\{p\}$, and satisfies
\begin{equation}\label{eq:level set MCF Dirichlet}
\begin{cases}
-\displaystyle \vert Du\vert\dvg\left(\frac{Du}{\vert Du\vert}\right)=1\;\;\text{in}\;\;\Omega\setminus\{p\}\\
u\equiv u_0\;\;\text{on}\;\;\partial\Omega\,.
\end{cases}
\end{equation}
Then $\sqrt{2(u-u_0)}$ is concave. 

If we no longer assume that $\Omega$ is bounded, but require instead that $u_0=-\infty$ and that the level sets of $u$ are bounded and convex, then $u$ is concave.
\end{theorem}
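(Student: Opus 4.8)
The plan is to adapt the concavity maximum principle of Theorem \ref{thm:concavity MP} to the degenerate, fully nonlinear level set equation \eqref{eq:level set MCF}, applied not to $u$ directly but to the new unknown $w := \sqrt{2(u-u_0)}$. First I would compute the PDE satisfied by $w$. Since $u = u_0 + \tfrac12 w^2$, we have $Du = w\,Dw$ and $D^2u = Dw\otimes Dw + w\,D^2w$; substituting into $-\vert Du\vert\dvg(Du/\vert Du\vert) = -\Delta u + \tfrac{1}{\vert Du\vert^2}D^2u(Du,Du) = 1$ and simplifying, the gradient terms should combine so that $w$ satisfies an equation of the schematic form $-f(Dw, D^2w) = b(x,w,Dw)$ where the right-hand side is a \emph{negative} multiple of $w^{-1}$ (hence monotone decreasing in $w$, as required by hypothesis (iii)) and $f$ is, up to the degenerate factor, the same level-set operator, which is weakly elliptic and concave in the Hessian. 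The key point making $w$ (rather than $u$) the right quantity is precisely that the zeroth-order term acquires the $w^{-1}$ dependence needed for the monotonicity hypothesis — this is the analogue of the classical trick of taking powers to convexify the reaction term.

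Next I would run Korevaar's concavity function argument exactly as in the proof of Theorem \ref{thm:concavity MP}: set $Z(r,x,y) := w(rx+(1-r)y) - (rw(x)+(1-r)w(y))$, suppose for contradiction that its minimum over $[0,1]\times\overline\Omega\times\overline\Omega$ is negative, rule out $r_0\in\{0,1\}$ trivially, and use that $w\equiv 0$ on $\partial\Omega$ (so the graph of $w$ lies below its boundary tangent hyperplanes, since $w>0$ inside and the boundary is smooth and convex) to push the minimizing pair $x_0,y_0$ into the interior. At an interior minimum one gets $Dw(x_0)=Dw(y_0)=Dw(z_0)=:p_0$ and the Hessian inequality $D^2w(z_0)\ge r_0 D^2w(x_0)+(1-r_0)D^2w(y_0)$, and then the ellipticity/concavity of $f$ together with monotonicity of $b$ in its $w$-slot yields $w(z_0)\ge r_0 w(x_0)+(1-r_0)w(y_0)$, i.e. $Z(x_0,y_0)\ge 0$, the desired contradiction.

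The main obstacle is the critical point $p\in\Omega$: $w$ is only twice differentiable on $\Omega\setminus\{p\}$, and $p$ is exactly where $Dw=0$ and the level-set operator degenerates, so the interior minimizer $(x_0,y_0)$ of $Z$ could a priori have $x_0=p$, $y_0=p$, or $z_0=p$. I would handle this by a localization/approximation argument: either show directly that $p$ cannot be a minimizing point because near $p$ the level sets are small convex spheres-like sets on which $w$ is strictly concave (so $Z$ is locally positive there), or — cleaner — perform the maximum principle on $\overline\Omega\setminus B_\delta(p)$ for small $\delta$, observing that on $\partial B_\delta(p)$ the function $w$ is close to a strictly concave function (a rescaled round expander), hence the boundary tangent hyperplane condition holds there too, and then let $\delta\to0$. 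Care is needed because the gradient condition \eqref{eq:classical concavity MP gradient condition} forces $z_0$ to have the same gradient as $x_0$ and $y_0$, which limits how the three points can conspire; exploiting this, together with the strict concavity of $w$ near $p$ coming from the explicit expander profile, should close the gap.

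For the second, unbounded statement with $u_0=-\infty$, the same scheme applies directly to $u$ itself: the relevant PDE is \eqref{eq:level set MCF} with right-hand side $1$, which is trivially monotone in $u$ in the (weak) sense that it is nonincreasing, and the hypothesis that the level sets of $u$ are bounded and convex replaces the role of the convex domain $\Omega$ — one runs Korevaar's argument on each sublevel set, uses that on its boundary $u$ equals a constant (so lies below its tangent hyperplanes), and the minimizer is again forced into the interior; the single critical point is still the only obstruction and is dispatched the same way. I would present the bounded case in full and remark that the ancient case follows by the identical argument, noting that concavity of $u$ is what is equivalent to the ancient Harnack inequality \eqref{eq:Harnack MCF ancient} by the computation preceding the theorem.
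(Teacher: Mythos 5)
Your overall strategy---pass to $w=\sqrt{2(u-u_0)}$ so that the level-set operator stays elliptic and concave in the Hessian while the zeroth-order term becomes $w^{-1}$, then run Korevaar's argument---is exactly the paper's. (Two small corrections: the right-hand side is $+w^{-1}$, not a negative multiple of $w^{-1}$; it is the positivity of $w$ that makes it strictly decreasing. And the graph of $w$ lies below its boundary tangent hyperplanes because those hyperplanes are \emph{vertical}, since $\vert Dw\vert=\vert Du\vert/w\to\infty$ on $\partial\Omega$; ``$w>0$ inside, $w=0$ on the boundary'' alone would not suffice.) The genuine gap is your treatment of the critical point $p$. Excising $B_\delta(p)$ destroys convexity of the domain, which Korevaar's argument needs (the segment from $x_0$ to $y_0$ must remain inside), and your fallback---strict concavity of $w$ near $p$ from an ``explicit expander profile''---invokes asymptotic roundness, which the theorem deliberately does not assume ($u$ is only $C^1$ at $p$ and twice differentiable away from it; see the footnote in the paper). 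The resolution is much simpler, and you already hold the key: the first-order conditions \eqref{eq:classical concavity MP gradient condition} give $Dw(x_0)=Dw(y_0)=Dw(z_0)=:p_0$. Since $Du=w\,Dw$ with $w>0$ in $\Omega$, if $p_0=0$ then all three points are critical points of $u$, hence all equal the unique critical point $p$, hence $Z(r_0,x_0,y_0)=0$; if $p_0\neq0$ then none of the three points is $p$, the equation holds at all of them, and the argument of Theorem \ref{thm:concavity MP} applies verbatim. No local model near $p$ is needed.

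Your plan for the ancient case also fails as written. Applying Korevaar directly to $u$ on a sublevel set $\Omega_t=\{u>t\}$ breaks down twice: the right-hand side of \eqref{eq:level set MCF} is the constant $1$, so the \emph{strict} monotonicity (hypothesis (iii)) that converts the final inequality $b(z_0,u(z_0),p_0)\le b(z_0,r_0u(x_0)+(1-r_0)u(y_0),p_0)$ into $Z\ge0$ is simply absent; and on $\partial\Omega_t$ one has $\vert Du\vert=1/H<\infty$, so the boundary tangent hyperplanes of $\graph u$ are not vertical and there is no reason for the graph to lie below them---that is essentially the concavity you are trying to prove. The paper instead applies the first part on each $\Omega_t$ with $u_0=t$, obtaining concavity of $(2(u-t))^{1/2}$ and hence the Hessian bound $D^2u(p)\le \frac{Du(p)\otimes Du(p)}{2(u(p)-t)}$ at any fixed interior point, and then sends $t\to-\infty$.
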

\begin{proof}
Set $w:= \sqrt{2(u-u_0)}$. Then
\[
Dw=\frac{Du}{w}
\]
and hence
\[
-\sum_{i,j=1}^n\left(\delta_{ij}-\frac{w_iw_j}{\vert Dw\vert^2}\right)w_{ij}=-\vert Dw\vert \dvg\left(\frac{Dw}{\vert Dw\vert}\right)=w^{-1}
\]
in $\Omega\setminus\{p\}$. Observe that the tangent hyperplanes to the graph of $w$ are vertical at the boundary. Indeed, the normal to the graph of $w$ is given by
\[
\mathrm{N}=\frac{(-Dw,1)}{\sqrt{1+\vert Dw\vert^2}}=\frac{(\nu,Hw)}{\sqrt{H^2w^2+1}}\,.
\]
The concavity maximum principle now implies that $w$ is concave. We proceed as in the proof of Theorem \ref{thm:concavity MP}: Let $(r_0,x_0,y_0)$ attain the minimum of the concavity function $Z$ (defined in \eqref{eq:concavity function}). Since $\graph w$ lies below its boundary tangent hyperplanes (see Remark \ref{rem:applications}), we may assume that $(r_0,x_0,y_0)$ is an interior point. So we obtain the gradient identities \eqref{eq:gradZ x}-\eqref{eq:gradZ y} and hence $p_0:=Du(x_0)=Du(y_0)$. If $p_0$ is not zero, the argument given in \cite[Theorem 3.13]{Kawohl85} implies that $Z(r_0,x_0,y_0)\ge 0$. On the other hand, if $p_0=0$, then $x_0=y_0$ (since, by hypothesis, $u$ has but one critical point) and hence $Z(r_0,x_0,y_0)=0$.

To prove the second claim, fix any point $p\in \Omega$ and any $t<u(p)$. Then $p\in \Omega_{t}:=\{q\in\Omega:u(q)>t\}$. The hypotheses on $u$ imply that $\Omega_t$ is bounded and hence, by the first part of the theorem, the function $w:\Omega_t\to\R$ given by $w(q)=(2(u(q)-t))^{\frac{1}{2}}$ is concave. Thus,
\bann
D^2u(p)={}&w^{-1}(p)D^2w(p)+\frac{Du(p)\otimes Du(p)}{w^{2}(p)}\le\frac{Du(p)\otimes Du(p)}{2(u(p)-t)}\,.
\eann
Taking $t\to-\infty$ yields the claim.
\end{proof}

Note that, for an initial hypersurface which bounds a bounded convex body, the corresponding solution to mean curvature flow remains smooth until it contracts to a single point, $p$. It follows that the arrival time is smooth away from its only critical point, $p$, and $C^1$ at $p$. In fact, Huisken \cite{Hu84} proved that the solution becomes `asymptotically round' near $p$, which actually implies that the arrival time is of class\footnote{Colding and Minicozzi \cite{ColdingMinicozziArrivalTime} proved that the arrival time of a general compact, \emph{mean} convex mean curvature flow is twice differentiable. But this result requires the full force of the structure theory for singularities in mean curvature flow. We only require here that the hypersurfaces shrink to a (not necessarily round) point.} $C^2$ \cite{Hu93}. In any case, Theorem \ref{thm:concavity of the arrival time} provides a rather simple proof of Hamilton's differential Harnack inequality.


\begin{corollary}\label{cor:Hamilton Harnack}
Let $\{\M^n_t\}_{t\in [t_0,T)}$ be a smooth family of boundaries $\M^n_t=\partial\Omega_t$ of bounded convex bodies $\Omega_t$ evolving by mean curvature. Suppose that the boundaries $\M^n_t$ contract to a point at time $T$. Then the square root $w:=(2(u-t_0))^{\frac{1}{2}}$ of the arrival time $u:\Omega_{t_0}\to\R$ is concave. Equivalently,
\[
\partial_tH+2\nabla_VH+A(V,V)+\frac{H}{2(t-t_0)}\geq 0\;\;\text{for all}\;\; V\in T\M_t\,,\;\;t\in(t_0,T)\,.
\]
If the solution is ancient, then $u$ is concave. Equivalently,
\[
\partial_tH+2\nabla_VH+A(V,V)\geq 0\;\;\text{for all}\;\; V\in T\M_t\,,\;\;t\in(t_0,T)\,.
\]
\end{corollary}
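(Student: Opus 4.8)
The plan is to reduce Corollary \ref{cor:Hamilton Harnack} to Theorem \ref{thm:concavity of the arrival time} together with the elementary geometric computations \eqref{eq:grad u MCF}--\eqref{eq:MCF Harnack from concavity} already recorded in Section \ref{sec:mean curvature flow}. First I would verify that the hypotheses of Theorem \ref{thm:concavity of the arrival time} are satisfied. Since the boundaries $\M^n_t$ bound convex bodies $\Omega_t$ and contract to a single point $p$ at time $T$, the arrival time $u:\Omega_{t_0}\to\R$ is defined on the bounded convex open set $\Omega=\mathrm{int}\,\Omega_{t_0}$, satisfies the level set equation \eqref{eq:level set MCF} away from $p$ (by \eqref{eq:grad u MCF} and $H=\dvg\nu$), equals $u_0:=t_0$ on $\partial\Omega=\M^n_{t_0}$, and by \eqref{eq:grad u MCF} has $p$ as its unique critical point (where $Du=0$); moreover $u\in C^1(\overline\Omega)$ by Huisken's asymptotic roundness near $p$ \cite{Hu84,Hu93} and smoothness elsewhere. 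Hence Theorem \ref{thm:concavity of the arrival time} applies and $w=(2(u-t_0))^{1/2}$ is concave on $\Omega$.

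Next I would convert the concavity of $w$ into the stated differential Harnack inequality. This is exactly the content of equations \eqref{eq:Hessian u MCF}--\eqref{eq:MCF Harnack from concavity}: fixing $q\in \M^n_t$ with $t>t_0$ and a vector $V\in T_q\R^{n+1}$, one decomposes $V$ according to whether it is tangent to $\M^n_t$ or has a normal component, and in each case $wD^2w(V,V)\le 0$ is equivalent to the corresponding instance of \eqref{eq:Harnack MCF}. Since every $V$ is a linear combination of these two types and $D^2w$ is quadratic in $V$, concavity of $w$ (i.e.\ $D^2w\le 0$ everywhere) is equivalent to \eqref{eq:Harnack MCF} holding for all $V\in T\M_t$ and all $t\in(t_0,T)$. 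This gives the first assertion.

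For the ancient case, if $\{\M^n_t\}_{t\in(-\infty,T)}$ is an ancient solution of bounded convex boundaries, then for each fixed time $t$ the level sets $\Omega_s=\{u>s\}$ with $s<t$ are bounded and convex, so the second part of Theorem \ref{thm:concavity of the arrival time} gives that $u$ itself is concave; equivalently, letting $t_0\to-\infty$ in the first inequality (which drops the $\frac{H}{2(t-t_0)}$ term), or directly using \eqref{eq:Hessian u MCF} and \eqref{eq:Harnack MCF ancient}, one obtains the ancient Harnack inequality.

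I do not expect any serious obstacle: the corollary is essentially a dictionary entry translating the geometric quantity ``concavity of the space-time function $w$'' into the tensorial Harnack expression, and all the algebra is already done in \eqref{eq:Hessian u MCF}--\eqref{eq:MCF Harnack from concavity}. The only mild subtlety worth a sentence is checking that the flow really does satisfy the structural hypotheses of Theorem \ref{thm:concavity of the arrival time}---in particular that $u$ is $C^1$ up to the shrinking point $p$ and has no other critical points---which follows from convexity being preserved and Huisken's theorem \cite{Hu84}; for the equivalence statement one should also note that self-similarly expanding solutions saturate the inequality, so concavity (rather than strict concavity) is the sharp conclusion.
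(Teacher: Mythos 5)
Your proposal is correct and follows essentially the same route as the paper: verify the hypotheses of Theorem \ref{thm:concavity of the arrival time} (single critical point and $C^1$ regularity via \eqref{eq:grad u MCF} and Huisken's asymptotic roundness), conclude concavity of $w$, and translate this into the Harnack inequality via \eqref{eq:Hessian w MCF} and \eqref{eq:MCF Harnack from concavity}, with the ancient case handled by the second part of the theorem.
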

\begin{proof}
By \eqref{eq:grad u MCF}, we find that $u$ has a single critical point and is differentiable everywhere. It follows that the arrival time $u$ of the family satisfies the hypotheses of Theorem \ref{thm:concavity of the arrival time}, and we conclude that its square root $w:=(2(u-t_0))^{\frac{1}{2}}$ is concave. The differential Harnack inequality then follows from \eqref{eq:Hessian w MCF} as in \eqref{eq:MCF Harnack from concavity}. The remaining claim is proved similarly.
\end{proof}

\begin{remark}
Note that, since the level-set flow equation is not defined when $Du=0$, a separate argument in Theorem \ref{thm:concavity of the arrival time} was necessary at such points. 

After we completed this work, we learned that Trudinger had essentially pointed out the proof of Theorem \ref{thm:concavity of the arrival time} in the concluding remarks to \cite{Trudinger90}, and that Evans and Spruck \cite[Theorem 7.6]{ES91} had proved a stronger version of Theorem \ref{thm:concavity of the arrival time} by applying the concavity maximum principle to approximating solutions to the (non-singular) $\varepsilon$-regularized level-set flow and taking a limit as $\varepsilon\to 0$. Notably, both of these works preceded Hamilton's paper \cite{HamiltonHarnack}.

Xu-Jia Wang \cite[Lemma 4.1]{Wa11} observed that the \emph{logarithm} of $u-t_0$ is concave (in general), and used this to deduce that $u$ is concave for an ancient solution. His argument seems to implicitly make use of the assumptions in Theorem \ref{thm:concavity of the arrival time} and was one of the motivations for this work.
\end{remark}

\section{Flows by nonlinear functions of curvature}\label{sec:nonlinear flows}

We now consider a much larger class of evolutions. Let $\{\M^n_t\}_{t\in [t_0,T)}$ be a family of smooth, convex boundaries $\M^n_t=\partial\Omega_t$ moving with normal velocity $-F\nu$, where $\nu(x,t)$ is the outward pointing unit normal to $\M^n_t$ at $x$. We consider speeds $F(\cdot,t):\M^n_t\to\R$ given by
\begin{equation}\label{eq:speed definition}
F(x,t)=f^\alpha\big(\nu(x,t),[A_{(x,t)}]\big)
\end{equation}
for some $\alpha>0$, where $A_{(x,t)}$ is the second fundamental form of $\M^n_t$ at $x$ and $[A_{(x,t)}]$ its component matrix with respect to an orthonormal frame for $T_x\M^n_t$, and $f:S^n\times \Gamma_+^{n\times n}\to\R$ is a smooth, positive function which is $\mathrm{SO}(n)$-invariant\footnote{I.e. invariant under conjugation of its second factor by special orthogonal matrices.} and monotone non-decreasing in its second entry, where $\Gamma^{n\times n}_+$ is the cone of positive definite, symmetric $n\times n$ matrices. 

Since $f$ is positive, the hypersurfaces move monotonically inwards, so the arrival time $u:\cup_{t\in[t_0,T)}\M^n_t\to\R$, which we recall is given by
\[
u(p)=t\;\;\iff\;\; p\in \M^n_t\,,
\]
is well-defined. If the boundaries contract to a point, then the arrival time is well-defined on all of $\Omega_{t_0}$ and of class $C^1(\overline\Omega)$. If $F$ is isotropic and the boundaries contract smoothly to a `round' point, then the arrival time is of class $C^2(\overline\Omega)$. 
Indeed, the same calculations as in the preceding section reveal that
\begin{equation}\label{eq:grad u FNL}
Du=-\frac{\nu}{F}
\end{equation}
and
\begin{equation}\label{eq:Hessian u FNL}
D^2u=\left(\begin{matrix} -A/F & \nabla F/F^2\\ \nabla F/F^2 & -\partial_t F/F^3\end{matrix}\right)\,.
\end{equation}
Since, in the isotropic case,
\[
\pd_tF=\dot F(\cd^2F+FA^2)\,,
\]
where $\dot F:=Df|_{[A]}$, the claims follow similarly as in \cite{Hu93}. Moreover, $u$ satisfies the \emph{level set flow}
\[
\vert Du\vert f^\alpha\left(-\frac{Du}{\vert Du\vert},-D\frac{Du}{\vert Du\vert}\right)=1\,.
\]



Set
\[
w:=((1+\alpha)(u-t_0))^{\frac{1}{1+\alpha}}\,.
\]
Then, away from the final point,
\[
Dw=w^{-\alpha}Du\,,
\]
\begin{align}
w^{\alpha}D^2w={}&D^2u-\alpha\frac{Du\otimes Du}{w^{1+\alpha}}\nonumber\\
={}&\left(\begin{matrix} -A/F& \nabla F/F^2\\ \nabla F/F^2 &-(\partial_tF+\alpha F/w^{1+\alpha})/F^3\end{matrix}\right)\label{eq:Hessian w FNL}
\end{align}
and
\[
\vert Dw\vert f^\alpha\left(-\frac{Dw}{\vert Dw\vert},-\frac{1}{\vert Dw\vert}\left[I-\frac{Dw\otimes Dw}{\vert Dw\vert^2}\right]\cdot D^2w\right)=w^{-\alpha}\,.
\]


As in \cite{An07}, let us call a function $f:S^n\times\Gamma_+^{n\times n}\to\R$ \emph{inverse-concave} if the dual function $f_\ast:S^n\times\Gamma_+^{n\times n}\to\R$ defined by
\[
f_\ast^{-1}(p,r)=f(p,r^{-1})
\]
is concave.

\begin{theorem}\label{thm:arrival time for nonlinear flows}
Let $\Omega\subset \R^{n+1}$ be a bounded, convex, open set with smooth boundary. Given $u_0\in\R$ and $\alpha>0$, suppose that $u\in C^1(\overline \Omega)$ has a single critical point, $p\in \Omega$, is smooth in $\Omega\setminus\{p\}$, and satisfies
\[
\begin{cases}
\vert Du\vert f^\alpha\left(-\frac{Du}{\vert Du\vert},-\frac{1}{\vert Du\vert}\left[I-\frac{Du\otimes Du}{\vert Du\vert^2}\right]\cdot D^2u\right)=1&\text{in}\;\;\Omega\setminus\{p\}\\
u=u_0&\text{on}\;\;\pd\Omega\,,
\end{cases}
\]
where $f:S^n\times\Gamma_+^{n\times n}\to\R$ is monotone non-decreasing and inverse-concave. 
Then $w:=((1+\alpha)(u-u_0))^{\frac{1}{1+\alpha}}$ is concave.

If we no longer assume that $\Omega$ is bounded, but require instead that $u_0=-\infty$ and that the level sets of $u$ are bounded, then $u$ is concave. 
\end{theorem}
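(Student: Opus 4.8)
The plan is to reproduce the strategy of Theorem~\ref{thm:concavity of the arrival time}, applying a concavity maximum principle to the level set flow satisfied by $w$, but with a more delicate second-order step that accommodates the fully nonlinear, merely \emph{inverse}-concave speed. First I would set up the reformulation: writing $W[w]:=-\tfrac{1}{|Dw|}\bigl(I-\tfrac{Dw\otimes Dw}{|Dw|^{2}}\bigr)D^{2}w$, which as an operator on $(Dw)^{\perp}$ is the second fundamental form of the level set of $w$ through the point (positive definite, since $f$ is defined on $\Gamma_{+}^{n\times n}$ only, so the level sets must be strictly convex), the function $w=((1+\alpha)(u-u_{0}))^{1/(1+\alpha)}$ solves $|Dw|^{1/\alpha}f\bigl(-\tfrac{Dw}{|Dw|},W[w]\bigr)=w^{-1}$ in $\Omega\setminus\{p\}$, and, exactly as in Theorem~\ref{thm:concavity of the arrival time}, the upward normal to $\graph w$ is a positive multiple of $(w^{-\alpha}\nu,F)$ and becomes horizontal as $w\to0$, so $\graph w$ lies below its boundary tangent hyperplanes. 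As in Theorems~\ref{thm:concavity MP} and \ref{thm:concavity of the arrival time}, Korevaar's reduction then places the minimum of the concavity function $Z$ of \eqref{eq:concavity function} at an interior triple $(r_{0},x_{0},y_{0})$; if $r_{0}\in\{0,1\}$ then $Z=0$ there, and otherwise \eqref{eq:gradZ x}--\eqref{eq:gradZ y} give $Dw(z_{0})=Dw(x_{0})=Dw(y_{0})=:p_{0}$ with $z_{0}:=r_{0}x_{0}+(1-r_{0})y_{0}$, where the case $p_{0}=0$ forces $x_{0}=y_{0}=p$ (single critical point) and again $Z=0$. So I may assume $p_{0}\neq0$; then $x_{0},y_{0},z_{0}\in\Omega\setminus\{p\}$ share the normal direction $\nu_{0}:=-p_{0}/|p_{0}|$ and the slope $|p_{0}|$, so the equation reads $f(\nu_{0},W[w](\cdot))^{-1}=|p_{0}|^{1/\alpha}w(\cdot)$ at each of them.

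The key new input is to extract an \emph{operator} inequality for the $W[w]$'s from the second-order condition. Instead of testing the (positive semidefinite) Hessian of $Z$ at $(x_{0},y_{0})$ only against diagonal directions, I would test it against an arbitrary pair $(\xi,\eta)$ with $\xi,\eta\in p_{0}^{\perp}$ placed in the $x$- and $y$-slots. Expanding $(\mathrm{Hess}\,Z)\bigl[(\xi,\eta),(\xi,\eta)\bigr]\ge0$ and using $\langle D^{2}w(\cdot)v,v\rangle=-|p_{0}|\,\langle W[w](\cdot)v,v\rangle$ for $v\in p_{0}^{\perp}$ gives
\[
\bigl\langle W[w](z_{0})(r_{0}\xi+(1-r_{0})\eta),\,r_{0}\xi+(1-r_{0})\eta\bigr\rangle\le r_{0}\langle W[w](x_{0})\xi,\xi\rangle+(1-r_{0})\langle W[w](y_{0})\eta,\eta\rangle
\]
for all $\xi,\eta\in p_{0}^{\perp}$. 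Taking $\xi=W[w](x_{0})^{-1}u$ and $\eta=W[w](y_{0})^{-1}u$ for arbitrary $u\in p_{0}^{\perp}$ and inserting this into the Legendre identity $\langle M^{-1}u,u\rangle=\max_{v}\bigl(2\langle u,v\rangle-\langle Mv,v\rangle\bigr)$ (with $M=W[w](z_{0})$, $v=r_{0}\xi+(1-r_{0})\eta$) produces
\[
W[w](z_{0})^{-1}\ge r_{0}\,W[w](x_{0})^{-1}+(1-r_{0})\,W[w](y_{0})^{-1}\qquad\text{on }p_{0}^{\perp},
\]
equivalently $W[w](z_{0})\le\bigl(r_{0}W[w](x_{0})^{-1}+(1-r_{0})W[w](y_{0})^{-1}\bigr)^{-1}=:H$. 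Now monotonicity and inverse-concavity of $f$ conclude, and---in contrast to the mean curvature case---no homogeneity is needed: monotonicity (in the Loewner order) gives $f(\nu_{0},W[w](z_{0}))\le f(\nu_{0},H)$; taking reciprocals and writing $f(\nu_{0},H)^{-1}=f_{\ast}\bigl(\nu_{0},r_{0}W[w](x_{0})^{-1}+(1-r_{0})W[w](y_{0})^{-1}\bigr)$, concavity of $f_{\ast}$ yields $f(\nu_{0},W[w](z_{0}))^{-1}\ge r_{0}f(\nu_{0},W[w](x_{0}))^{-1}+(1-r_{0})f(\nu_{0},W[w](y_{0}))^{-1}$, i.e.\ $w(z_{0})\ge r_{0}w(x_{0})+(1-r_{0})w(y_{0})$. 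Hence $Z\ge0$ and $w$ is concave.

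For the unbounded case I would argue exactly as for the corresponding statement in Theorem~\ref{thm:concavity of the arrival time}: for each $t<u(p)$ the superlevel set $\Omega_{t}=\{u>t\}$ is bounded and has the level sets of $u$, so the first part gives concavity of $w_{t}:=((1+\alpha)(u-t))^{1/(1+\alpha)}$; since $w_{t}^{\alpha}D^{2}w_{t}=D^{2}u-\tfrac{\alpha}{(1+\alpha)(u-t)}Du\otimes Du$, this forces $D^{2}u\le\tfrac{\alpha}{(1+\alpha)(u-t)}Du\otimes Du$, and letting $t\to-\infty$ gives $D^{2}u\le0$. I expect the main obstacle to be precisely the operator inequality on the $W[w]$'s: the usual Kennington-type (diagonal) test directions give only the weaker quadratic-form inequality $\langle W[w](z_{0})v,v\rangle^{-1}\ge r_{0}\langle W[w](x_{0})v,v\rangle^{-1}+(1-r_{0})\langle W[w](y_{0})v,v\rangle^{-1}$, which is not enough to pair with inverse-concavity; one genuinely needs the full matrix inequality, and the device above---independent test directions in the two slots, followed by Legendre duality---is the point where the argument is ``more subtle'' than in Theorem~\ref{thm:concavity MP} and Theorem~\ref{thm:concavity of the arrival time}. (One should also record the implicit point that $u$ solving the equation forces its level sets to be strictly convex, so that $W[w]$ lies in the domain $\Gamma_{+}^{n\times n}$ of $f$.)
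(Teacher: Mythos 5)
Your proposal is correct and follows essentially the same route as the paper: Korevaar's concavity function, a second-variation inequality with \emph{independent} test directions in the two slots restricted to $p_0^{\perp}$, the resulting Loewner inequality $W(z_0)^{-1}\ge r_0W(x_0)^{-1}+(1-r_0)W(y_0)^{-1}$, and then monotonicity plus concavity of $f_\ast$. Your Legendre-duality step is just a reformulation of the paper's choice $\hat a=A_{x_0}^{-1}$, $\hat b=A_{y_0}^{-1}$ in the second-variation inequality, and your treatment of the boundary, the critical point, and the unbounded case matches the paper's.
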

\begin{proof}
Consider the concavity function $Z:[0,1]\times\Omega\times\Omega\to\R$, which we recall is defined by 
\[
Z(r,x,y):= w(rx+(1-r)y)-\big(rw(x)+(1-r)w(y)\big)\,.
\]
Choose the triple $(r_0,x_0,y_0)$ so that
\[
Z(r_0,x_0,y_0)=\min_{[0,1]\times\overline\Omega\times\overline \Omega}Z(r,x,y)\,.
\]
As before, it suffices to assume that $r_0$, $x_0$ and $y_0$ are interior points. Let us abuse notation by writing $Z(x,y):=Z(r_0,x,y)$. Then $(x_0,y_0)$ is a stationary point of $Z$ and hence, setting $z_0:= r_0x_0+(1-r_0)y_0$,
\[
0=\partial_{x^i}Z(x_0,y_0)=r_0(w_i(z_0)-w_i(x_0))
\]
and
\[
0=\partial_{y^i}Z(x_0,y_0)=(1-r_0)(w_i(z_0)-w_i(y_0))\,.
\]
So
\[
Dw(z_0)=Dw(x_0)=Dw(y_0)=:p_0\,.
\]
We may also assume that $p_0\neq 0$ since 
if $p_0=0$, we would have $x_0=y_0=z_0$, and hence $Z(x_0,y_0)=0$. 

At this point, the proof differs from that of previously known results. In order to obtain the best possible result, we need to optimize the second variation inequality for $Z$ (cf. \cite{An07,AndrewsClutterbuckfundamentalgap,AndrewsClutterbuckmodulusofcontinuity}). Since $(x_0,y_0)$ is a local minimum, we obtain, for any pair of endomorphisms $a$ and $b$ of $\R^{n+1}$,
\begin{align*}
0\leq{}&\left.\frac{d^2}{ds^2}\right|_{s=0}Z(x_0+sa\cdot e_i,y_0+sb\cdot e_j)\\
={}&(a_i^p\partial_{x^p}+b_i^p\partial_{y^p})(a_j^q\partial_{x^q}+b_j^q\partial_{y^q})Z(x_0,y_0)\\
={}&(r_0a+(1-r_0)b)_i^p(r_0a+(1-r_0)b)_j^qw_{pq}(z_0)\\
{}&-r_0a_i^pa_j^qw_{pq}(x_0)-(1-r_0)b_i^pb_j^qw_{pq}(y_0)\,.
\end{align*}
The endomorphisms $a$ and $b$ will be chosen in order to optimize this inequality. Denote by
\[
\pi_0:=I-\frac{p_0\otimes p_0}{\vert p_0\vert^2}
\]
the projection onto the orthogonal complement of $p_0$. Since the equation is degenerate in the direction of $Du$, we consider only those endomorphisms of the form
\[
a=\hat a\circ \pi_0\;\;\text{and}\;\;b=\hat b\circ\pi_0\,,
\]
where $\hat a$ and $\hat b$ are endomorphisms of $\pi_0\cdot\R^{n+1}$. Then
\begin{align}\label{eq:optimize this!}
\hat c{}_i^p\hat c{}_j^q(A_{z_0})_{pq}\le{}&r_0\hat a{}_i^p\hat a{}_j^q(A_{x_0})_{pq}+(1-r_0)\hat b{}_i^p\hat b{}_j^q(A_{y_0})_{pq}\,,
\end{align}
where $\hat c:=r_0\hat a+(1-r_0)\hat b$ and
\begin{equation*}
A_x:=-\frac{1}{\vert Dw(x)\vert}\left(I-\frac{Dw(x)\otimes Dw(x)}{\vert Dw(x)\vert^2}\right)\cdot D^2w(x)\,.
\end{equation*}
%
Setting $\hat a=A_{x_0}^{-1}$ and $\hat b=A_{y_0}^{-1}$, we find
\[
r_0A_{x_0}^{-1}+(1-r_0)A_{y_0}^{-1}\le A_{z_0}^{-1}\,.
\]
The monotonicity and concavity of $f_\ast$ then yield
\begin{align*}
w(z_0)={}&\vert p_0\vert^{-\frac{1}{\alpha}}f^{-1}\left(\frac{p_0}{\vert p_0\vert},A_{z_0}\right)\\
={}&\vert p_0\vert^{-\frac{1}{\alpha}}f_\ast\left(\frac{p_0}{\vert p_0\vert},A_{z_0}^{-1}\right)\\
\geq{}&\vert p_0\vert^{-\frac{1}{\alpha}}f_\ast\left(\frac{p_0}{\vert p_0\vert},r_0A_{x_0}^{-1}+(1-r_0)A_{y_0}^{-1}\right)\\
\geq{}&r_0\vert p_0\vert^{-\frac{1}{\alpha}}f_\ast\left(\frac{p_0}{\vert p_0\vert},A_{x_0}^{-1}\right)+(1-r_0)\vert p_0\vert^{-\frac{1}{\alpha}}f_\ast\left(\frac{p_0}{\vert p_0\vert},A_{y_0}^{-1}\right)\\
={}&r_0w(x_0)+(1-r_0)w(y_0)\,.
\end{align*}
The first claim is proved. The second follows as in Theorem \ref{thm:concavity of the arrival time}.
%
%
\end{proof}

As a corollary, we obtain differential Harnack inequalities for flows by positive powers of inverse-concave speeds which contract convex hypersurfaces to points. Such inequalities were already observed by Andrews \cite[Corollary 5.11]{AndrewsHarnack} (see also Chow \cite{ChowGCF}). 

\begin{corollary}\label{cor:FNL Harnack}
Let $\{\M^n_t\}_{t\in [t_0,T)}$ be a smooth family of boundaries $\M^n_t=\partial\Omega_t$ of bounded convex bodies $\Omega_t$ moving with inward normal speed
\[
F(x,t)=f^\alpha\big(\nu(x,t),[A_{(x,t)}]\big)
\]
for some $\alpha>0$, where $f:S^n\times\Gamma^{n\times n}_+\to\R_+$ is a smooth function which is $\mathrm{SO}(n)$-invariant, monotone non-decreasing and inverse-concave in its second entry. Suppose that the hypersurfaces $\M^n_t$ contract to a point at time $T$. Then the $(1+\alpha)$-th root $w:=((1+\alpha)(u-t_0))^{\frac{1}{1+\alpha}}$ of the arrival time $u:\Omega_{t_0}\to\R$ is concave. Equivalently,
\[
\partial_tF+2\nabla_VF+A(V,V)+\!\frac{\alpha F}{(1\!+\!\alpha)(t\!-\!t_0)}\!\geq\!0\;\text{for all}\; V\in T\M_t,\; t\in(t_0,T)\,.
\]
If the solution is ancient, then $u$ is concave. Equivalently,
\[
\partial_tF+2\nabla_VF+A(V,V)\geq 0\;\;\text{for all}\;\; V\in T\M_t\,,\;\; t\in(t_0,T)\,
\]
\end{corollary}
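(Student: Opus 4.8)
The plan is to deduce Corollary \ref{cor:FNL Harnack} from Theorem \ref{thm:arrival time for nonlinear flows} in essentially the same way that Corollary \ref{cor:Hamilton Harnack} was deduced from Theorem \ref{thm:concavity of the arrival time}. First I would check that the arrival time $u:\Omega_{t_0}\to\R$ of the given flow satisfies the hypotheses of Theorem \ref{thm:arrival time for nonlinear flows}. Since $\M^n_t=\pd\Omega_t$ is smooth, strictly convex (convexity is preserved for these flows) and contracts to a point $p$ at time $T$, equation \eqref{eq:grad u FNL} gives $Du=-\nu/F$, which is never zero away from $p$; hence $u$ has a single critical point at $p$, is smooth on $\Omega_{t_0}\setminus\{p\}$, and is $C^1$ up to $p$ (as noted in the text preceding \eqref{eq:grad u FNL}). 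Moreover $u$ satisfies the level set flow equation $\vert Du\vert f^\alpha(-Du/\vert Du\vert,-\tfrac{1}{\vert Du\vert}[I-\tfrac{Du\otimes Du}{\vert Du\vert^2}]\cdot D^2u)=1$ with $u=t_0$ on $\pd\Omega_{t_0}$, and $f$ is $\mathrm{SO}(n)$-invariant, monotone non-decreasing and inverse-concave by assumption. Theorem \ref{thm:arrival time for nonlinear flows} then immediately yields that $w:=((1+\alpha)(u-t_0))^{\frac{1}{1+\alpha}}$ is concave.

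Next I would translate concavity of $w$ into the stated differential Harnack inequality. This is the analogue of the computation \eqref{eq:MCF Harnack from concavity}, now using \eqref{eq:Hessian w FNL}. Fix $t\in(t_0,T)$, a point $q\in\M^n_t$ and a vector $V\in T_q\R^{n+1}$. If $V$ is tangent to $\M^n_t$ then $w^{-\alpha}D^2w(V,V)=-A(V,V)/F\le 0$, which is automatic. Otherwise $V=\lambda(V^{\top}-F\nu)$ for some $\lambda\in\R$ and $V^{\top}\in T_q\M^n_t$, and expanding $D^2w(V,V)$ using the block form \eqref{eq:Hessian w FNL} gives, up to the positive factor $-\lambda^2/F$, exactly the quantity
\[
\pd_tF+\frac{\alpha F}{(1+\alpha)(t-t_0)}+2\nabla_{V^{\top}}F+A(V^{\top},V^{\top})\,.
\]
Since $w^{-\alpha}D^2w\le 0$ and $-\lambda^2/F<0$, this quantity is $\ge 0$; as every $V^{\top}\in T\M_t$ arises this way, the first Harnack inequality follows. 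Conversely the same identity shows the Harnack inequality implies concavity of $w$, so the two are equivalent.

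Finally, for the ancient case I would argue as in the last paragraph of the proof of Theorem \ref{thm:arrival time for nonlinear flows} and of Corollary \ref{cor:Hamilton Harnack}: for a fixed $p\in\Omega$ and any $t<u(p)$, the sub-level set $\Omega_t=\{q:u(q)>t\}$ is bounded and convex, so the first part gives concavity of $w_t:=((1+\alpha)(u-t))^{\frac{1}{1+\alpha}}$ on $\Omega_t$; then from $D^2u=w_t^{-\alpha}D^2w_t+\alpha\,Du\otimes Du/w_t^{1+\alpha}\le \tfrac{\alpha}{1+\alpha}\,Du\otimes Du/(u-t)$ and letting $t\to-\infty$ we get $D^2u\le 0$, i.e. $u$ is concave, which is equivalent (dropping the $t_0$ term, now infinite) to $\pd_tF+2\nabla_VF+A(V,V)\ge 0$. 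I do not anticipate a serious obstacle here: all the analytic content is already contained in Theorem \ref{thm:arrival time for nonlinear flows}, and the only points requiring care are the bookkeeping in the block-matrix computation \eqref{eq:Hessian w FNL} (getting the coefficient $\alpha/((1+\alpha)(t-t_0))$ exactly right) and the justification that $u$ is $C^1$ at $p$ and smooth elsewhere, which for isotropic $F$ follows from Huisken-type asymptotics as indicated in the text and for general $F$ from the non-vanishing of $F$ together with \eqref{eq:grad u FNL}–\eqref{eq:Hessian u FNL}.
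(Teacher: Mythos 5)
Your proposal is correct and is exactly the argument the paper intends: the paper's own proof of Corollary \ref{cor:FNL Harnack} is the single line ``similar to Corollary \ref{cor:Hamilton Harnack}'', and you have supplied precisely the omitted details (verification of the hypotheses of Theorem \ref{thm:arrival time for nonlinear flows} via \eqref{eq:grad u FNL}, the block-matrix computation with \eqref{eq:Hessian w FNL} giving the factor $\alpha F/((1+\alpha)(t-t_0))$, and the $t\to-\infty$ limit for the ancient case). No gaps.
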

\begin{proof}
The proof is similar to that of Corollary \ref{cor:Hamilton Harnack}.
\end{proof}

\begin{remark}
Corollary \ref{cor:FNL Harnack} assumes that the solution contracts to a single point at the singular time. This is known to be the case for solutions to isotropic flows satisfying only slightly stronger conditions than $\alpha$-inverse-concavity \cite[Theorem 5]{AMZ}. (The proof of this fact does not require differential Harnack inequalities). Moreover, examples are given in \cite{AMZ} of speeds which do not preserve convexity of the level sets $\M^n_t$, and hence cannot admit power concave arrival times. 

We do not require that the limiting shape is round. Indeed, in many situations where Harnack inequalities are known, this will not be the case \cite{AndrewsAffineNormalFlow,AndrewsCurves}.

In contrast to the known approaches to differential Harnack inequalities, Theorem \ref{thm:arrival time for nonlinear flows} does not require any regularity hypotheses for the speed.
\end{remark}

\bibliographystyle{acm}
\bibliography{../bibliography}

\end{document}